\theoremstyle{plain}
\newtheorem{thm}{\protect\theoremname}
\theoremstyle{plain}
\newtheorem{cor}[thm]{\protect\corollaryname}
\providecommand{\corollaryname}{Corollary}
\providecommand{\theoremname}{Theorem}
\begin{document}

\title{Monotonicity of Dissipative Flow Networks Renders Robust Maximum
Profit Problem Tractable: General Analysis and Application to Natural
Gas Flows}


\author{Marc Vuffray, Sidhant Misra, Michael Chertkov
\thanks{S. Misra, M. Vuffray and M. Chertkov. are with the Center for Nonlinear Studies and Theoretical Division T-4 of Los Alamos National Laboratory,
        Los Alamos, NM 87544. M. Chertkov is also affiliated with the New Mexico Consortium, Los Alamos, NM 87544.
        {\tt\small \{vuffray|sidhant|chertkov\}@lanl.gov}}
}

\maketitle
\begin{abstract}
We consider general, steady, balanced flows of a commodity over a network where an instance of the network flow is characterized by edge flows and nodal potentials. Edge flows in and out of a node are assumed to be conserved,  thus representing standard network flow relations. The remaining freedom in the flow distribution over the network is constrained by potentials so that the difference of potentials at the head and the tail of an edge is expressed as a nonlinear function of the edge flow. We consider networks with nodes divided into three categories: sources that inject flows into the network for a certain cost, terminals which buy the flow at a fixed price
and ``internal'' customers each withdrawing an uncertain amount of flow, which has a priority and thus it is not priced. Our aim is to operate the network such that the profit, i.e. amount of flow sold to terminals minus cost of injection, is maximized, while maintaining the potentials within prescribed bounds. We also require that the operating point is robust with respect to the uncertainty of customers' withdrawals. In this setting we prove that potentials are monotonic functions of the withdrawals. This observation enables us to replace in the maximum profit optimization infinitely many nodal constraints, each representing a particular value of withdrawal uncertainty, by only two constraints representing the cases where all nodes with uncertainty consume their minimum and maximum amounts respectively. We illustrate this general result on example of the natural gas transmission network. In this enabling example gas withdrawals by consumers are assumed uncertain, the potentials are gas pressures squared, the potential drop functions are bilinear in the flow and its intensity with an added tunable factor representing compression.
\end{abstract}

\IEEEpeerreviewmaketitle{}

\section{Introduction\label{sec:Introduction}}

The maximum profit, or alternatively minimum loss, network flow problem aims to maximize monetary benefit by delivering maximum amount of flow (of a commodity) from sources to terminals. The setting is general, and as such it applies to natural gas networks \cite{Osiadacz_1987,12BNV,Misra2015},  our enabling example,  but also to electric circuits \cite{Kirchhoff1847,98Bol} and traffic flows \cite{10CSADF,13Var_a,13Var_b}.

Formally, these problems are constructed by extending the standard network flow setting, see e.g. \cite{89AMO,95AMOR} and references there in, with additional physical constraints, introducing nodal potentials and relating the potential drop along an edge of the network to a function of the flow.  Thus,  in the case of the gas flows,  the potentials are pressures squared and the pressure square drop is a bilinear function of the flow  and the flow amplitude with added term related to compression \cite{Osiadacz_1987,12BNV,Misra2015}.


In this manuscript we focus on discussing a robust version of the maximum profit network flow problem, more accurately ``adjustable robust optimization'' problem following the terminology commonly accepted in the literature on robust optimization \cite{03BS,ben2009robust,13BNS,13BG}. This means that in the robust optimization network flow model considered there are three different types of variables: the uncertain variables, the non-adjustable variables and the adjustable variables. The uncertain variables express
information that is not certain, i.e. available for the optimization decision only in the form of allowed range. The non-adjustable variables represent the ``here and now'' decision in the system. Their values should be feasible for any realization of the uncertain variables from the allowed range. Finally the adjustable variables represents the ``wait and see'' decisions. Their values
are adaptable to a particular values of the uncertain parameters.

The adjustable robust optimization is composed of a test of robust feasibility and an optimization procedure. A value of the non-adjustable variables is said to be robust feasible if for any acceptable configuration of the uncertain variables there exists feasible values of the adjustable variables. Then the optimization procedure consists in finding a robust feasible configuration of the non-adjustable variables such that the objective function, maximum profit or minimum loss, is minimized.

The difficulty in solving robust optimization problems arises primarily due to the robust feasibility constraint, which is in essence an intersection of  infinitely (and possibly uncountably) many constraints, one corresponding to each
allowed value of the uncertain parameters. This results in the so-called semi-infinite program \cite{Hettich93}. In the robust optimization literature, the ways to handle these constraints can be classified into three different categories. First, when the constraints and the uncertainty set have special structure, e.g., linear constraints and ellipsoidal uncertainty set, it is possible to  use duality theory to represent the infinitely many constraints with one single  dual feasibility constraint \cite{BentalNemirovski98},
\cite{Nemirovski99}, \cite{BenTal00}. Also included in this category is approximations and relaxations of more complicated uncertainly sets and/or constraints with simpler ones that are amenable to duality theory.
The second category/approach is similar to the so-called ``scenario based'' approach, where  a (possibly random) sampling of the uncertain parameters is performed, and the feasibility constraint corresponding to each sampled parameter is included in the optimization formulation \cite{Bertsimas07}, \cite{Boyd09}. The quality of the solution thus obtained depends on the number of samples used and also on how the samples were chosen. The third case, which is the approach taken in this manuscript, is when one can analytically or numerically identify the ``extreme-cases'', i.e, find the subset of values of the uncertain parameters that can violate the feasibility constraints. When this subset is finite, or has a finite representation, the robust feasibility constraint again reduces to a finite number of standard constraints. Examples where this strategy is used are scarce. (See \cite[pp. 388]{Boyd09} for a brief discussion on the topic.)

Our main result, stated in Theorem \ref{thm:ARC_max_throughput}, is that the adjustable robust maximum profit problem is tractable,  in the sense that instead of keeping infinitely many conditions, associated with all possible realizations of the uncertain variables, it is sufficient to only account for two extremal conditions correspondent to every uncertain variable (customer withdrawal) greedily maximize/minimize their values. We implement a multi-stage strategy to prove the results. First, we prove in Theorem \ref{thm:uniqueness_existence_solutions} existence and uniqueness of the optimal solutions for the adjustable variables given that the other parameters in the problem are fixed. Then, we prove in Theorem \ref{thm:aquarius} that the adjustable variables are monotonic functions of the uncertain variables. Finally, we combine all these result to prove the main tractability statement. Schematic outline of our proof strategy is shown in Fig.~\ref{fig:proof_strategy}.
\begin{figure}[tbh]
\centering{}\includegraphics[scale=1.0]{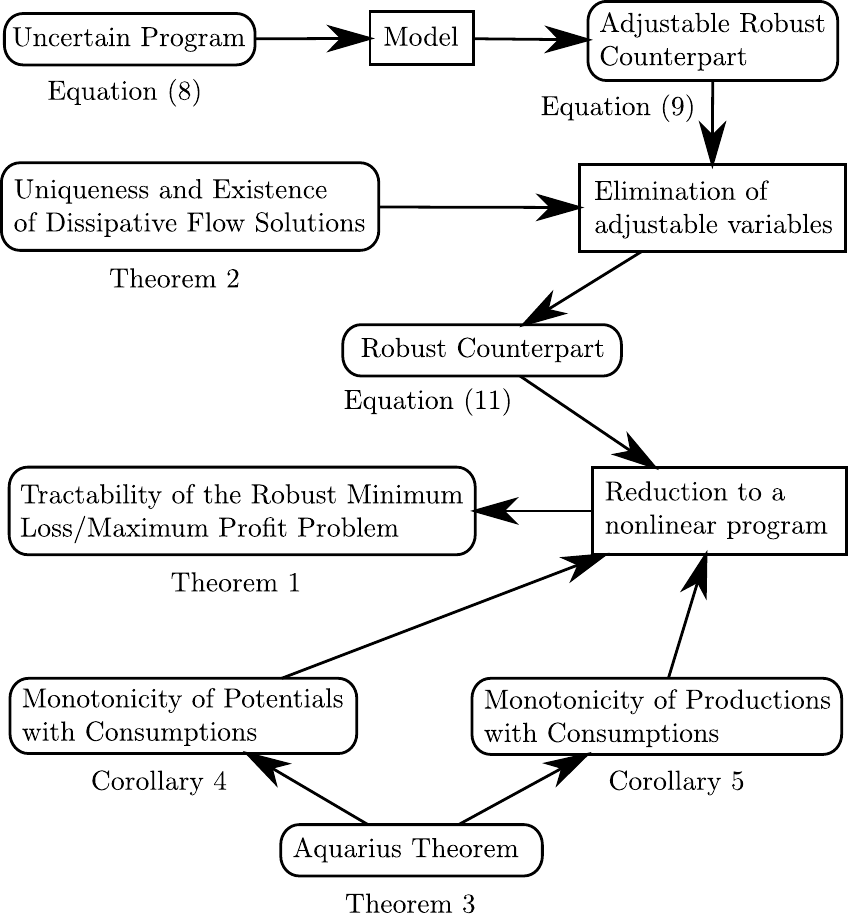}\protect\caption{\label{fig:proof_strategy}
Schematic diagram relating different statements of the manuscript.}
\end{figure}

The manuscript is organized as follows. In Section \ref{sec:setting-and-result},
we briefly introduce the concept of dissipative network flows and
define the problem of the adjustable robust maximum profit/minimum loss problem.
In this Section we also state our main theorem/statement. In Section \ref{sec:energy-function-method}
we prove existence and uniqueness of a configuration of the adjustable
variables given that the other parameters are fixed. The proof is based on the energy function method which consist in rewriting a set
of equations as the result of a convex minimization problem. In Section \ref{sec:monotonicity-property} we prove that the adjustable variables are monotonic with the respect to uncertain variables. This result
is general -- it applies to the case of unconstrained network flows. Finally, in Section \ref{sec:Illustration-with-Natural-gas-network},
we discuss application of our results to robust optimization in natural gas transportation networks.

\section{Formulation and the Main Result \label{sec:setting-and-result}}

\subsection{Dissipative Flow Networks}

Let $G=\left(V,E\right)$ be a connected directed graph, where $V$ and $E\subset V\times V$ denotes the set of nodes and directed edges respectively.   Our notational convention is that for any pair of , $i$ and $j$, connected by an edge,  both $(i,j)$ and $(j,i)$ belong to $E$.
Function $q:V\rightarrow\mathbb{R}$ associates production/consumption $q_i$ to the node $i\in V$.
We assume that the total production/consumption is balanced
\begin{equation}
\sum_{i\in V}q_{i}=0.\label{eq:production_balance}
\end{equation}

Flow $\phi:E\rightarrow\mathbb{R}$ is a function mapping an edge
$\left(i,j\right)\in E$ to flow intensity $\phi_{ij}$ (which we also call flow, or edge flow, when not confusing). The  flow is skew-symmetric with respect to the edge inversion,
i.e. for all $\left(i,j\right)\in E$
\begin{equation}
\phi_{ij}=-\phi_{ji}.\label{eq:flow_anti_symmetry}
\end{equation}
The flow is conserved locally,  i.e. at any  $i\in V$
equation
\begin{equation}
\sum_{j\in\partial i}\phi_{ji}+q_{i}=0,\label{eq:flow_conservation}
\end{equation}
where we denote the set of neighbors of $i\in V$ by $\partial i:=\left\{ j\in V\mid\left(j,i\right)\in E\right\} $.
Note that in the context of electric circuits, the flow conservation
equation \eqref{eq:flow_conservation} is also called the first Kirchhoff's
Law.

Flow over an edge $(i,j)\in{\cal E}$ is related to the potentials at end-nodes of the edges through the following potential drop equation
\begin{equation}
\pi_{j}-\pi_{i}=-f_{ij}\left(\phi_{ij}\right),\label{eq:potential_drop}
\end{equation}
where the functions, $f_{ij}(\cdot)$, coined dissipation functions, are assumed to be continuous and monotonically decreasing, thus leading to decrease/dissipation of potential along the direction of the flow.
The dissipation functions satisfy the following symmetry relation with respect to an edge inversion
\begin{equation}
f_{ij}\left(x\right)=-f_{ji}\left(-x\right),\label{eq:drop_symmetry}
\end{equation}
and they also possess an operational freedom, i.e. the dissipation functions can take any form from their admissible set, $f\in \mathcal{F}$ depending on the problem considered.


Note that existence of a potential function that satisfies Eq.~\eqref{eq:potential_drop}
allows a graph cycle interpretation.
Let $\gamma=\left\{ i_{1},\ldots,i_{n}\right\} \in V^{n}$
be a directed path in $G$ such that $i_{k}\neq i_{l}$ for $k,l\in\left\{ 1,...n-1\right\} $.
If $i_{1}=i_{n}$ we say that $\gamma$ is a directed cycle, otherwise
we called $\gamma$ a non-intersecting directed path. For any directed
cycle $\gamma$ in $G$ we sum the potential drop Eq.~\eqref{eq:potential_drop}
along the directed edges of $\gamma$ to get the following expression
\begin{equation}
\sum_{k=1}^{n-1}f_{i_{k}i_{k+1}}\left(\phi_{i_{k}i_{k+1}}\right)=0.\label{eq:second_Kirchhoff_law}
\end{equation}
Eqs. \eqref{eq:second_Kirchhoff_law} can be interpreted as a nonlinear generalization of what is called second Kirchoff's law in the circuit theory. In fact, conditions expressing the fact that the flows satisfy
Eqs.~\eqref{eq:second_Kirchhoff_law} for any cycle basis of the graph is fully equivalent to the set of conditions \eqref{eq:potential_drop} over all edges of the graph.

We call the network $G=\left(V,E\right)$,  with flows, potentials and the dissipative functions, $f$, defined on it, the dissipative flow network.

\subsection{Robust Maximum Profit/Minimum Loss Optimization}

We consider a dissipative flow network where the set of nodes
is partitioned into three non-empty subsets $V=S\cup T\cup R$. The
set $S$ contains nodes $i\in S$ that are sources injecting flow into the network which costs $g_{i}\left(q_{i}\right)$,
where $g_{i}$ is a pre-set function. Nodes $i\in T$ are terminals where the flows are withdrawn leading to the following payment per node, $h_{i}\left(q_{i}\right)$,
where $h_{i}$ is non-decreasing function. Nodes $i\in R$ are internal
customers each consuming a fixed amount of flow that is uncertain, i.e. it lies
in the set $\left[\underline{q}_{i},\overline{q}_{i}\right]$. We assume that the internal consumption is mandatory, and as such it is not priced. We aim to maximize the revenue, defined as a difference between the profit brought by selling the flow at the terminals minus the cost of injection. Alternatively we can state the problems as minimization of expenses, defined as the cost of injection minus the profit at the terminal:
\begin{equation}
c\left(q_{S},q_{T}\right):=\sum_{i\in S}g_{i}\left(q_{i}\right)-\sum_{i\in T}h_{i}\left(q_{i}\right),\label{eq:max_throughput_objective}
\end{equation}
where $q_{S}$ and $q_{T}$ are shorthand notations for  production
at the sources and the terminals respectively.

The maximum profit (minimum loss) problem is an uncertain program that minimize
the cost (\ref{eq:max_throughput_objective}) with respect to injections, potentials, flows and dissipation functions from their admissible set $\mathcal{F}$
\begin{equation}
\begin{aligned} & \underset{q,\pi, f,\phi}{\text{minimize}} &  & c\left(q_{S},q_{T}\right)\\
 & \text{subject to} &  & q_{i}=q_{i}^{\text{realization}} & \forall i\in R,\\
 &  &  & \sum_{j\in\partial i}\phi_{ji}+q_{i}=0 & \forall i\in V\\
 &  &  & \pi_{j}-\pi_{i}=-f_{ij}\left(\phi_{ij}\right) & \forall\left(i,j\right)\in E\\
 &  &  & \sum_{i\in V}q_{i}=0\\
 &  &  & \underline{\pi}_{i}\leq\pi_{i}\leq\overline{\pi}_{i} & \forall i\in V,
\end{aligned}
\label{eq:uncertain_max_throughput}
\end{equation}
where the regular customer withdrawals could be any, but fixed, value $q_{i}^{\text{realization}}\in\left[\underline{q}_{i},\overline{q}_{i}\right]$.

The optimization (\ref{eq:uncertain_max_throughput}) is deterministic as the uncertain parameters are fixed.  Below we will turn to its robust equivalent, thus aiming at taking optimization decision leading to a solution which is feasible for any realization of uncertainty while also achieving losses which are first maximized (adversarially) over the internal (uncertain) degrees of freedom and then minimized over the external (operational) degrees of freedom.
Therefore, to arrive at a plausible robust optimization formulation one, first, need to split all the variables into the sets of the so-called operational (non-adjustable) and adjustable variables \cite{ben2009robust}.
Operational variables represent decisions taken regardless of a particular configuration of the uncertainty.
Adjustable variables, according to the name, are flexible, i.e. they adjust themselves to the uncertainty.


We consider the case where the operational variables are the production
values at the source $q_{S}:=\left\{ q_{i}\right\} _{i\in S}$, the
potential values at the terminals $\pi_{T}:=\left\{ \pi_{i}\right\} _{i\in T}$
and the dissipation functions $f\in\mathcal{F}$. The adjustable variables
are the production values at the terminals $q_{T}:=\left\{ q_{i}\right\} _{i\in T}$,
the potential values at the source $\pi_{S}:=\left\{ \pi_{i}\right\} _{i\in S}$,
the potential values at the consumers $\pi_{R}:=\left\{ \pi_{i}\right\} _{i\in R}$
and the flows, $\phi$. The production values at the consumers are denoted by $q_{R}:=\left\{ q_{i}\right\} _{i\in R}$ and the cartesian product of their uncertainty set is denoted as $Q=\left[\underline{q}_{i_{1}},\overline{q}_{i_{1}}\right]\times\cdots\times\left[\underline{q}_{i_{\left|R\right|}},\overline{q}_{i_{\left|R\right|}}\right]$.
We refer the reader to the illustration of the natural gas network in Section
\ref{sec:Illustration-with-Natural-gas-network} to justify this particular choice of adjustable and non-adjustable variables.

With this choice of the operational variables, an adjustable robust counterpart
of the deterministic minimum cost problem \eqref{eq:uncertain_max_throughput} becomes
\begin{equation}
\begin{aligned} & \underset{q_{S},\pi_{T},f,x}{\text{minimize}} &  & x\\
 & \text{such that } &  & \forall q_{R}\in Q,\,\exists q_{T},\pi_{S},\pi_{R},\phi\\
 & \text{subject to} &  & c\left(q_{S},q_{T}\right)\leq x\\
 &  &  & \sum_{j\in\partial i}\phi_{ji}+q_{i}=0 & \forall i\in V\\
 &  &  & \pi_{j}-\pi_{i}=-f_{ij}\left(\phi_{ij}\right) & \forall\left(i,j\right)\in E\\
 &  &  & \sum_{i\in V}q_{i}=0\\
 &  &  & \underline{\pi}_{i}\leq\pi_{i}\leq\overline{\pi}_{i} & \forall i\in V.
\end{aligned}
\label{eq:ARC_max_throughput}
\end{equation}
Considered directly,  Eq.~(\ref{eq:ARC_max_throughput}) is intractable as containing an infinite number of constraints,
each associated with a particular $q_{R}\in Q$.
However,  and in spite of the grim naive assessment, Eq.~(\ref{eq:ARC_max_throughput}) allows tractable re-formulation stated in the following main theorem/result of the manuscript:
\begin{thm}[Tractability of the Robust Minimum Loss/Maximum Profit Problem]
\label{thm:ARC_max_throughput}There exists a function $\widetilde{q}^{T}:\mathbb{R}^{\left|R\right|+\left|S\right|+\left|T\right|}\times\mathcal{F}\rightarrow\mathbb{R}^{\left|T\right|}$
and a function $\widetilde{\pi}^{R\cup S}:\mathbb{R}^{\left|R\right|+\left|S\right|+\left|T\right|}\times\mathcal{F}\rightarrow\mathbb{R}^{\left|R\cup S\right|}$
such that the adjustable robust counterpart \eqref{eq:ARC_max_throughput}
reads
\[
\begin{aligned} & \underset{q_{S},\pi_{T},f}{\text{minimize}} &  & c\left(q_{S},\tilde{q}_{T}^{T}\left(\overline{q}_{R},q_{S},\pi_{T},f\right)\right)\\
 & \text{subject to} &  & \underline{\pi}_{i}\leq\widetilde{\pi}_{i}^{R\cup S}\left(\underline{q}_{R},q_{S},\pi_{T},f\right) & \forall i\in R\cup S\\
 &  &  & \widetilde{\pi}_{i}^{R\cup S}\left(\overline{q}_{R},q_{S},\pi_{T},f\right)\leq\overline{\pi}_{i} & \forall i\in R\cup S\\
 &  &  & \underline{\pi}_{i}\leq\pi_{i}\leq\overline{\pi}_{i} & \forall i\in T,
\end{aligned}
\]
where $\underline{q}_{R}:=\left\{ \underline{q}_{i}\right\} _{i\in R}$
and $\overline{q}_{R}:=\left\{ \overline{q}_{i}\right\} _{i\in R}$
are respectively the minimum and maximum withdrawal values at the
consumer nodes.
\end{thm}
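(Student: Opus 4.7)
The plan is to combine Theorem~\ref{thm:uniqueness_existence_solutions} with Theorem~\ref{thm:aquarius} to compress the semi-infinite family of constraints indexed by $q_R\in Q$ in \eqref{eq:ARC_max_throughput} down to two extremal instances, one per ``corner'' of the hyper-rectangle $Q$.

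First I would use Theorem~\ref{thm:uniqueness_existence_solutions} to \emph{define} the two maps that appear in the statement. For any choice of operational variables $(q_S,\pi_T,f)$ and any realization $q_R\in Q$, the flow-conservation, potential-drop and global-balance equations together with the fixed terminal potentials $\pi_T$ admit a unique adjustable tuple $(q_T,\pi_S,\pi_R,\phi)$. Singling out the components of interest yields the maps $\widetilde{q}^{T}(q_R,q_S,\pi_T,f)$ and $\widetilde{\pi}^{R\cup S}(q_R,q_S,\pi_T,f)$. With these in hand, \eqref{eq:ARC_max_throughput} is rewritten so that the only remaining ``$\forall q_R\in Q$'' clauses are the pressure bounds on $R\cup S$ and the epigraph inequality $c(q_S,\widetilde{q}_T^{T}(q_R,\cdot))\leq x$; the pressure bounds on $T$ involve only the operational variable $\pi_T$ and survive unchanged.

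Second I would invoke Theorem~\ref{thm:aquarius} to conclude that both $\widetilde{\pi}^{R\cup S}$ and $\widetilde{q}^{T}$ are coordinate-wise monotone functions of $q_R$. Because $Q$ is an axis-aligned box, componentwise monotonicity implies that the pointwise extrema over $Q$ are attained at the two corners $\underline{q}_R$ and $\overline{q}_R$, with one corner delivering the tightest lower bound on each $\widetilde{\pi}^{R\cup S}_{i}$ and the other the tightest upper bound. Thus the infinite conjunction of pressure constraints on $R\cup S$ collapses to the two pointwise inequalities displayed in the theorem statement. For the epigraph constraint, since $h_i$ is non-decreasing and $\widetilde{q}^{T}$ is monotone in $q_R$, the map
\[
q_R\longmapsto c\bigl(q_S,\widetilde{q}_T^{T}(q_R,q_S,\pi_T,f)\bigr)
\]
is itself monotone in $q_R$, so its supremum over $Q$ is attained at $\overline{q}_R$. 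Imposing the worst-case inequality and minimizing $x$ is therefore equivalent to directly minimizing $c(q_S,\widetilde{q}_T^{T}(\overline{q}_R,q_S,\pi_T,f))$. Assembling these three reductions reproduces exactly the program in the theorem.

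The main obstacle I foresee is the careful verification of the signs of monotonicity: Theorem~\ref{thm:aquarius} delivers only \emph{signed} monotonicity of adjustable quantities in the internal withdrawals, and for each of the three derived constraints (lower pressure bound, upper pressure bound, worst-case cost) one must check that the extremizing corner of $Q$ is precisely the one named in the statement, using the monotonicity of $h_i$ to transfer the sign from $\widetilde{q}^{T}$ onto the cost. A secondary, but routine, task is to confirm that the partition of variables into operational, adjustable and uncertain matches the partition implicit in Theorem~\ref{thm:uniqueness_existence_solutions}, so that $\widetilde{q}^{T}$ and $\widetilde{\pi}^{R\cup S}$ are genuinely well-defined functions of exactly the arguments listed.
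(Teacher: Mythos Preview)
Your proposal is correct and follows essentially the same two-step route the paper takes: use Theorem~\ref{thm:uniqueness_existence_solutions} to define $\widetilde{q}^{T}$ and $\widetilde{\pi}^{R\cup S}$ via the unique solution of the network equations, then use monotonicity in $q_R$ together with the box shape of $Q$ and the non-decreasing $h_i$ to collapse the semi-infinite constraints to the two corners $\underline{q}_R$, $\overline{q}_R$. The only point worth flagging is that Theorem~\ref{thm:aquarius} by itself is a flow statement (existence of a path with ordered edge flows) and does not directly yield monotonicity of potentials or terminal productions; the paper inserts two short corollaries (Corollaries~\ref{cor:monotonicity_pressure_consumption} and~\ref{cor:monotonicity_production_consumption}) that sum the potential-drop equation along the Aquarius path and use the monotonicity of $f_{ij}$ to obtain $\pi_u\ge\pi_u^{*}$ and $q_t\le q_t^{*}$ respectively. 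This is exactly the ``careful verification of the signs'' you anticipate, so your plan and the paper's execution coincide.
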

The functions $\widetilde{q}^{T}$ and $\widetilde{\pi}^{R\cup S}$
mentioned in Theorem \ref{thm:ARC_max_throughput} are derived explicitly, moreover stated in terms of the strictly convex optimizations, in Section \ref{sec:energy-function-method}. In words Theorem \ref{thm:ARC_max_throughput}
tells us that only the two extreme realizations of $q_{R}$ have to
be considered in order to solve the semi-infinite optimization \eqref{eq:ARC_max_throughput}.

Our strategy to prove the Theorem \ref{thm:ARC_max_throughput} is the
following: We first prove that there exists a unique solution of the
adjustable variables $q_{T}$, $\pi_{S}$, $\pi_{R}$ and $\phi$
given a configuration $q_{R}$, $q_{S}$, $\pi_{T}$ and $f$. This
states the existence of the functions $\widetilde{q}^{T}$ and $\widetilde{\pi}^{R\cup S}$.
Then, we show that the solution $q_{T}$, $\pi_{S}$, $\pi_{R}$
are monotonic functions of $q_{R}$, that enables us to reduce number of critical constraints to only two corresponding to the extreme configurations of the uncertainty. This establishes that $\widetilde{q}^{T}$
and $\widetilde{\pi}^{R\cup S}$ are monotonic functions of $q_{R}$.

The proof of existence and uniqueness of the solution is done in Section
\ref{sec:energy-function-method} using the energy function method.
The monotonicity properties are proved in Section \ref{sec:monotonicity-property}
and only require some properties of the flow network.

\section{Uniqueness of Dissipative Flow Solutions: the Energy Function Method\label{sec:energy-function-method}}

In this Section we show the existence and uniqueness of a configuration
of adjustable variables, given a configuration of the non-adjustable
variables. 
The key idea here is to relate solutions of the dissipative flow network equations \eqref{eq:production_balance},
\eqref{eq:flow_conservation} and \eqref{eq:potential_drop} to the extremum
of some convex ``energy'' function.
\begin{thm}[Uniqueness and Existence of the Dissipative Flow Solutions]
\label{thm:uniqueness_existence_solutions}Let $G=\left(V,E\right)$
be a dissipative flow network. Given $q_{R}$, $q_{S}$, $\pi_{T}$
and $f$, there exists a unique solution $q_{T}$, $\pi_{S}$, $\pi_{R}$
and $\phi$ which satisfies the balanced production equation \eqref{eq:production_balance},
the flow conservation equation \eqref{eq:flow_conservation} and the
potential drop equation \eqref{eq:potential_drop}.\end{thm}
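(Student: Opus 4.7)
The plan is to recast the dissipative network equations as a convex variational problem and then extract existence and uniqueness from standard convex-analytic facts. Concretely, I would pick an orientation $E^{+}\subset E$ with one representative from each unordered edge, define the antiderivatives $F_{ij}(x):=\int_{0}^{x}f_{ij}(s)\,ds$, and introduce the energy
\begin{equation*}
\mathcal{E}(\phi)\;:=\;\sum_{(i,j)\in E^{+}} F_{ij}(\phi_{ij}) \;+\; \sum_{k\in T}\pi_{k}\sum_{l\in\partial k}\phi_{lk}.
\end{equation*}
The skew-symmetries \eqref{eq:flow_anti_symmetry} and \eqref{eq:drop_symmetry} make the first sum independent of the orientation, and the given data $\pi_{T}$ enters only as a linear tilt. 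The remaining data $q_{S},q_{R}$ is encoded in the affine set
\begin{equation*}
\mathcal{A}\;:=\;\bigl\{\phi:\textstyle\sum_{l\in\partial k}\phi_{lk}+q_{k}=0\;\text{for all}\;k\in S\cup R\bigr\}.
\end{equation*}
I would then argue that minimizing $\mathcal{E}$ on $\mathcal{A}$ is equivalent to solving \eqref{eq:production_balance}--\eqref{eq:potential_drop} in the remaining unknowns.

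Three properties make this variational problem well-posed. First, $\mathcal{A}$ is non-empty: because $G$ is connected and $T\neq\emptyset$, the node--edge incidence matrix restricted to rows $S\cup R$ has full row rank, so the $|S|+|R|$ linear equalities defining $\mathcal{A}$ are consistent for any choice of $q_{S},q_{R}$. Second, $\mathcal{E}$ is strictly convex: strict monotonicity of each dissipation function $f_{ij}$ makes its antiderivative $F_{ij}$ strictly convex, while the terminal contribution is linear in $\phi$. Third, $\mathcal{E}$ is coercive on $\mathcal{A}$: under the natural assumption that $|f_{ij}(x)|\to\infty$ as $|x|\to\infty$ (satisfied, for instance, by the bilinear gas law of Section \ref{sec:Illustration-with-Natural-gas-network}), the primitives $F_{ij}$ grow super-linearly and dominate the linear tilt along every recession direction of $\mathcal{A}$. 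Taken together, these three properties force $\mathcal{E}$ to attain its minimum on $\mathcal{A}$ at a single point $\phi^{\star}$.

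From $\phi^{\star}$ the remaining unknowns follow directly. The terminal withdrawals are defined by $q_{k}:=-\sum_{l\in\partial k}\phi^{\star}_{lk}$ for $k\in T$, which realizes \eqref{eq:flow_conservation} at every terminal; combined with skew-symmetry and the constraints on $S\cup R$ this yields the global balance \eqref{eq:production_balance} automatically. The Lagrange multipliers $\mu_{k}$ for $k\in S\cup R$ associated with the equality constraints defining $\mathcal{A}$ are the candidates for $\pi_{S},\pi_{R}$: stationarity of the Lagrangian reduces, on every oriented edge $(i,j)\in E^{+}$, to $f_{ij}(\phi^{\star}_{ij})=\mu_{i}-\mu_{j}$ with the convention $\mu_{k}:=\pi_{k}$ for $k\in T$, which is exactly \eqref{eq:potential_drop}. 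Since $G$ is connected and $\pi$ is pinned on the non-empty set $T$, no constant-shift freedom survives and the multipliers, hence $\pi_{S\cup R}$, are uniquely determined.

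The step I expect to be most delicate is coercivity. The linear tilt associated with $\pi_{T}$ acts along directions in the recession cone of $\mathcal{A}$ that do not all reduce to circulations, so without a super-linear lower bound on the $F_{ij}$ the infimum could in principle fail to be attained. One must therefore encode a mild growth condition into the admissible class $\mathcal{F}$ (or, alternatively, argue coercivity directly on each component of the recession cone using graph connectivity). Once this obstacle is dispatched, the remainder is a direct application of standard convex analysis and automatically produces the single-valued maps $\widetilde{q}^{T}$ and $\widetilde{\pi}^{R\cup S}$ whose existence is asserted in Theorem \ref{thm:ARC_max_throughput}.
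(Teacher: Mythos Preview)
Your proof is correct, but it takes a genuinely different route from the paper.  The paper works in \emph{potential space}: it inverts each $f_{ij}$, writes flow conservation at $S\cup R$ purely in terms of $\pi$, and recognizes these equations as the first-order conditions of the \emph{unconstrained} strictly convex energy
\[
\mathcal{E}(\pi_{S},\pi_{R})=\sum_{(i,j)\in O}\Psi_{ij}(\pi_{i}-\pi_{j})-\sum_{i\in V}\pi_{i}q_{i},
\]
with $\Psi_{ij}$ a primitive of $f_{ij}^{-1}$; strict convexity is obtained from a reduced-incidence-matrix rank argument.  You instead work in \emph{flow space}, minimizing a strictly convex functional of $\phi$ over the affine slice $\mathcal{A}$ and recovering $\pi_{S\cup R}$ as Lagrange multipliers.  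The two formulations are convex conjugates of one another (the classical ``content'' versus ``co-content'' pair in nonlinear network theory), so neither is more correct than the other.  What your version buys is that $\phi$ and then $q_{T}$ come out as primal minimizers rather than being reconstructed afterward, and you are explicit about the coercivity needed for existence (the superlinear growth condition $|f_{ij}(x)|\to\infty$), a point the paper's proof passes over.  What the paper's version buys is an unconstrained problem in fewer variables ($|S|+|R|$ potentials instead of $|E^{+}|$ flows), and the minimizer directly furnishes the map $\widetilde{\pi}^{R\cup S}$ used downstream in Theorem~\ref{thm:ARC_max_throughput}.
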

\begin{proof}
Let us first invert the potential drop Eq.~\eqref{eq:potential_drop}
to express the flow with respect to the potential
\[
f_{ij}^{-1}\left(\pi_{i}-\pi_{j}\right)=\phi_{ij}.
\]
This operation is possible because $f_{ij}$ is an increasing function. Then, we
rewrite the flow conservation equations only in terms of the potential
\[
\sum_{j\in\partial i}f_{ji}^{-1}\left(\pi_{j}-\pi_{i}\right)+q_{i}=0.
\]
Therefore if one finds $\pi_{S}$ and $\pi_{R}$ that satisfy the flow conservation
equation for $i\in S\cup R$, one reconstructs the production $q_{T}$
using flow conservation for $i\in T$. It is easy to see that the
balanced production Eq.~\eqref{eq:production_balance} is automatically
satisfied
\begin{eqnarray*}
\sum_{i\in V}q_{i} & = & -\sum_{i\in V}\sum_{j\in\partial i}f_{ji}^{-1}\left(\pi_{j}-\pi_{i}\right)\\
 & = & -\sum_{\left(i,j\right)\in E}f_{ij}^{-1}\left(\pi_{i}-\pi_{j}\right)+f_{ji}^{-1}\left(\pi_{i}-\pi_{j}\right)\\
 & = & 0,
\end{eqnarray*}
where in the last line we use the symmetry relation \eqref{eq:drop_symmetry}
of the drop function.

We use the energy function method to prove that the potentials $\pi_{S}$
and $\pi_{R}$ are uniquely determined. Let us introduce
the set of oriented edges
\[
O:=\left\{ \left(i,j\right)\in E\mid i<j\right\} .
\]
The graph $\Gamma=\left(V,O\right)$ is an orientation of $G$ i.e. $\left(i,j\right) \in \Gamma$ if and only if $\left(i,j\right) \in G$ and $\left(j,i\right) \notin \Gamma$.  

Consider the following energy function
\begin{equation}
\mathcal{E}\left(\pi_{S},\pi_{R}\mid q_{R},q_{S},\pi_{T},f\right)=\sum_{\left(i,j\right)\in O}\varPsi_{ij}\left(\pi_{i}-\pi_{j}\right)-\sum_{i\in V}\pi_{i}q_{i},\label{eq:energy_function}
\end{equation}
where $\varPsi_{ij}$ is a primitive of $f_{ij}^{-1}$. Note that
the primitive exists because $f_{ij}^{-1}$ is continuous as $f_{ij}$
is continuous. It is easy to see that a minimum of $\mathcal{E}$
with respect to $\pi_{S}$ and $\pi_{R}$ satisfies the flow conservation
equation at $i\in S\cup R$. Indeed by taking the derivative, one arrives at
\[
\frac{\partial\mathcal{E}}{\partial\pi_{i}}=\sum_{j\in\partial i}f_{ji}^{-1}\left(\pi_{j}-\pi_{i}\right)+q_{i}.
\]
We now have to prove that the extremum is unique and that the extremum  is actually the minimum.

Let us define for every edge $\left(i,j\right)\in O$, variables
\[
\Delta_{ij}:=\pi_{i}-\pi_{j}.
\]
It is straightforward to see that the function
\[
\sum_{\left(i,j\right)\in O}\varPsi_{ij}\left(\Delta_{ij}\right),
\]
is a strictly convex function of $\Delta_{ij}$. Note that $f_{ij}^{-1}$
is increasing because $f_{ij}$ is increasing. This implies that $\Psi_{ij}$
is also a strictly convex function of $\Delta_{ij}$. Now we have to prove
that it the function is also strictly convex in $\pi_{S}$ and $\pi_{R}$.

First we express the relation between $\Delta_{ij}$ and $\pi_{i}$
in a matrix form using the incidence matrix $M$ of $\Gamma$. The
incidence matrix $M$ is a $\left|V\right|\times\left|O\right|$ matrix
with entries $M_{k,\left(i.j\right)}=\delta_{kj}-\delta_{ki}$ where
$\delta$ is the Kronecker's symbol. The relation between $\Delta=\left\{ \Delta_{ij}\right\} _{ij\in O}$
and $\pi$ reads
\[
\Delta=-M^{\intercal}\pi.
\]
We can also explicitly separate contributions of the potential
$\pi_{S}$ and $\pi_{R}$ from $\pi_{T}$ in this equation by
introducing the reduced incidence matrices $M_{V\setminus T}$ and
$M_{T}$. The matrices $M_{V\setminus T}$ and $M_{T}$ are equal
to $M$ with the lines, corresponding respectively to to $i\in T$
and $i\in V\setminus T$ respectively, removed. One finally obtains the following
equation
\[
\Delta=-M_{V\setminus T}^{\intercal}\left(\begin{array}{c}
\pi_{S}\\
\pi_{R}
\end{array}\right)-M_{T}^{\intercal}\pi_{T}.
\]
Observe that the matrix $M_{V\setminus T}^{\intercal}$ is full rank.
To see this, note that the product $M_{V\setminus T}M_{V\setminus T}^{\intercal}$
corresponds to the Laplacian of the graph $L=MM^{\intercal}$ with the lines $i$ and $j$, corresponding to $i,j\in T$, removed. Due to the Kirchoff's matrix-tree theorem \cite{Kirchhoff1847} we know that
the determinant of any co-factor of $L$ is non-zero on a connected
graph, which implies that
\[
\text{rank}\left(M_{V\setminus T}^{\intercal}\right)=\left|V\setminus T\right|.
\]
This proves that $\mathcal{E}$ is strictly convex in $\pi_{S}$ and
$\pi_{R}$.
\end{proof}
Theorem \ref{thm:uniqueness_existence_solutions} enables us to eliminate
the adjustable variables from the adjustable robust counterpart \eqref{eq:ARC_max_throughput}.
We can now rewrite our optimization problem as follows
\begin{equation}
\begin{aligned} & \underset{q_{S},\pi_{T},f,x}{\text{minimize}} &  & x\\
 & \text{such that } &  & \forall q_{R}\in Q,\\
 & \text{subject to} &  & c\left(q_{T},\widetilde{q}_{T}^{T}\left(q_{R},q_{S},\pi_{T},f\right)\right)\leq x\\
 &  &  & \underline{\pi}_{i}\leq\widetilde{\pi}_{i}^{R\cup S}\left(q_{R},q_{S},\pi_{T},f\right)\leq\overline{\pi}_{i} & \forall i\in R,S\\
 &  &  & \underline{\pi}_{i}\leq\pi_{i}\leq\overline{\pi}_{i} & \forall i\in T.
\end{aligned}
\label{eq:ARC_max_throughput_unique}
\end{equation}
Function $\widetilde{\pi}^{R\cup S}\left(q_{R},q_{S},\pi_{T},f\right)$
which outputs the potential $\pi_{i}$ for $i\in R\cup S$ was explicitly
constructed in the result of the strictly convex optimization
\begin{equation}
\widetilde{\pi}^{R\cup S}\left(q_{R},q_{S},\pi_{T},f\right):=\arg\min_{\pi_{S},\pi_{R}}\mathcal{E}\left(\pi_{S},\pi_{R}\mid q_{R},q_{S},\pi_{T},f\right),\label{eq:pressure_from_energy}
\end{equation}
where $\mathcal{E}$ is defined by \eqref{eq:energy_function}. Once
the potentials are found using Eq.~\eqref{eq:pressure_from_energy},
one can easily reconstruct the productions
\begin{equation}
\widetilde{q}_{i}^{T}\left(q_{R},q_{S},\pi_{T},f\right)=\sum_{j\in\partial i}f_{ji}^{-1}\left(\pi_{j}-\pi_{i}\right).\label{eq:production_from_energy}
\end{equation}

\section{Monotonicity Properties of the Dissipative Flow Networks\label{sec:monotonicity-property}}

The existence and uniqueness properties from Section \ref{sec:energy-function-method}
allowed us to simplify the robust minimum loss optimization problem. However, it still remains
in the form of an intractable semi-infinite program given by Eq.~\eqref{eq:ARC_max_throughput_unique}. In this Section
we show that only two scenarios for $q_{R}$ has to be considered
in order to solve Eq.~\eqref{eq:ARC_max_throughput_unique}. To achieve this goal we show that the potentials $\widetilde{\pi}^{R\cup S}$, given by Eq.~\eqref{eq:pressure_from_energy}, and the productions $\widetilde{q}_{i}^{T}$,
given by Eq.~\eqref{eq:production_from_energy} are monotonic functions of $q_{R}$.

\subsection{Flow Networks}

The following theorem applies to general network flows, i.e. flow networks which are not necessary
dissipative. We only require here that the flows satisfy the flow conservation
Eq.~\eqref{eq:flow_conservation}. This very general result will then become a starting point to prove monotonicity properties of the dissipative flow networks.
\begin{thm}[Aquarius Theorem]
\label{thm:aquarius}Let $G=\left(V,E\right)$ be a flow network
and let $\phi$ and $\phi^{*}$ be flows that satisfy the flow conservation
Eq.~\eqref{eq:flow_conservation} for the productions $q$ and
$q^{*}$ respectively. Let $T\subset V$ be a subset of $V$. If $q_{i}\geq q_{i}^{*}$
for all $i\in V\setminus T$, then for every node $u\in V\setminus T$
there exists a non-intersecting path $\left\{ i_{1},\ldots,i_{n}\right\} $
such that $i_{1}\in T$, $i_{n}=u$ and $\phi_{i_{l}i_{l+1}}^{*}\geq\phi_{i_{l}i_{l+1}}$.
Moreover if $q_{u}>q_{u}^{*}$ the inequality is strict i.e. $\phi_{i_{l}i_{l+1}}^{*}>\phi_{i_{l}i_{l+1}}$.\end{thm}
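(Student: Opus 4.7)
The plan is to pass to the difference $\psi_{ij} := \phi_{ij} - \phi^{*}_{ij}$ with effective imbalance $\delta_{i} := q_{i} - q^{*}_{i}$. Both $\phi$ and $\phi^{*}$ are antisymmetric, so $\psi_{ji} = -\psi_{ij}$, and subtracting the two instances of \eqref{eq:flow_conservation} gives $\sum_{j \in \partial i} \psi_{ji} + \delta_{i} = 0$ at every $i \in V$. The hypothesis becomes $\delta_{i} \geq 0$ on $V \setminus T$, and the desired inequality $\phi^{*}_{i_{l} i_{l+1}} \geq \phi_{i_{l} i_{l+1}}$ is simply $\psi_{i_{l} i_{l+1}} \leq 0$. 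The conclusion is therefore equivalent to the following reachability statement in the directed subgraph $G_{\psi} := \{(i,j) \in E : \psi_{ij} \leq 0\}$: every $u \in V \setminus T$ admits a directed path from $T$ in $G_{\psi}$. Non-intersection of the path then comes for free, since any shortest such path in a finite digraph is automatically simple.

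With this reformulation in hand, the whole argument reduces to a single cut computation. Let $R$ be the set of nodes reachable from $T$ in $G_{\psi}$ and assume, for contradiction, that $A := V \setminus R$ is non-empty; by construction $T \subseteq R$, so $A \subseteq V \setminus T$. Summing flow conservation over $A$,
\[
-\sum_{i \in A} \delta_{i} \;=\; \sum_{i \in A} \sum_{j \in \partial i} \psi_{ji},
\]
I would split the right-hand side by whether the neighbor $j$ lies in $A$ or in $R$. Edges with both endpoints in $A$ contribute zero by antisymmetry (each unordered pair appears twice as $\psi_{ji} + \psi_{ij} = 0$), while each boundary term with $j \in R$, $i \in A$ must satisfy $\psi_{ji} > 0$, because otherwise $(j,i) \in G_{\psi}$ and $i$ would inherit reachability from $j$. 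The left-hand side is $\leq 0$, so a sum of strictly positive numbers vanishes, which is possible only if the $R$--$A$ cut is empty. Connectivity of $G$ together with $R \supseteq T \neq \emptyset$ then forces $A = \emptyset$, proving the non-strict part.

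For the strict claim I would rerun the identical argument on the sparser subgraph $G_{\psi}^{<} := \{(i,j) \in E : \psi_{ij} < 0\}$ with its reachable set $R_{s}$. The only change is that boundary terms now merely satisfy $\psi_{ji} \geq 0$, so the cut need not be empty; instead the sum identity forces $\sum_{i \in A_{s}} \delta_{i} = 0$, and because $\delta_{i} \geq 0$ on $A_{s} \subseteq V \setminus T$ this collapses pointwise to $\delta_{i} = 0$ for every $i \in A_{s}$. Hence any node $u$ with $\delta_{u} > 0$ must already lie in $R_{s}$ and is connected to $T$ by a strictly dominating path, as required. The delicate point, and what I see as the main obstacle, is exactly this asymmetry between the two regimes: in the non-strict version the strict positivity of the boundary terms produces a topological contradiction via connectivity, whereas in the strict version $A_{s}$ may remain non-empty and one has to read off the pointwise conclusion $\delta_{i} = 0$ from a single aggregate identity.
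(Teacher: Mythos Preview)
Your argument is correct. It shares the decisive ingredient with the paper's proof --- summing the conservation law over a carefully chosen node set and extracting a sign constraint on the boundary edges --- but the packaging differs enough to merit a brief comparison. The paper works node by node: it fixes $u$, grows a layered family $A_1\subset A_2\subset\cdots$ outward from $u$ by adjoining, at each stage, neighbours reachable along edges with $\phi^{*}_{ij}>\phi_{ij}$, and invokes the cut-sum identity on $A_k$ to show the growth cannot stall before touching $T$; the path is then read off by backtracking through the layers $B_k$. You instead pass once to the difference flow $\psi$, define a single global reachable-from-$T$ set in the auxiliary digraph $G_\psi$, and apply one cut-sum on its complement to force it empty via connectivity. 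Your route handles all $u\in V\setminus T$ simultaneously and separates the strict and non-strict claims more cleanly (the paper argues only the strict case in detail and declares the non-strict case ``identical'', glossing over the fact that the first-step strict inequality $q_u>q_u^{*}$ is unavailable there). In exchange, the paper's layered construction is more explicitly constructive: the path emerges directly from the $B_k$, whereas you rely on a shortest-path extraction after establishing abstract reachability.
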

\begin{proof}
We construct the path by induction. Choose $u\in V\setminus T$ and
assume that $q_{u}>q_{u}^{*}$. The proof in the case where $q_{u}=q_{u}^{*}$
is identical. We define a sequence of subsets of nodes $B_{k}\subset V$
and $A_{k}=\bigcup_{l=1}^{k}B_{l}$ in the following way
\begin{eqnarray*}
B_{1} & := & \left\{ u\right\} \\
B_{k+1} & := & \left\{ i\in V\setminus A_{k}\mid\exists j\in\partial i\cap B_{k}\,\text{s.t.}\,\phi_{ij}^{*}>\phi_{ij}\right\} .
\end{eqnarray*}
An example of the sets $A_{k}$ and $B_{k}$ is shown in Fig.~\ref{fig:Construction-of-Ak-ensembles}.
\begin{figure}[tbh]
\centering{}\includegraphics[scale=0.5]{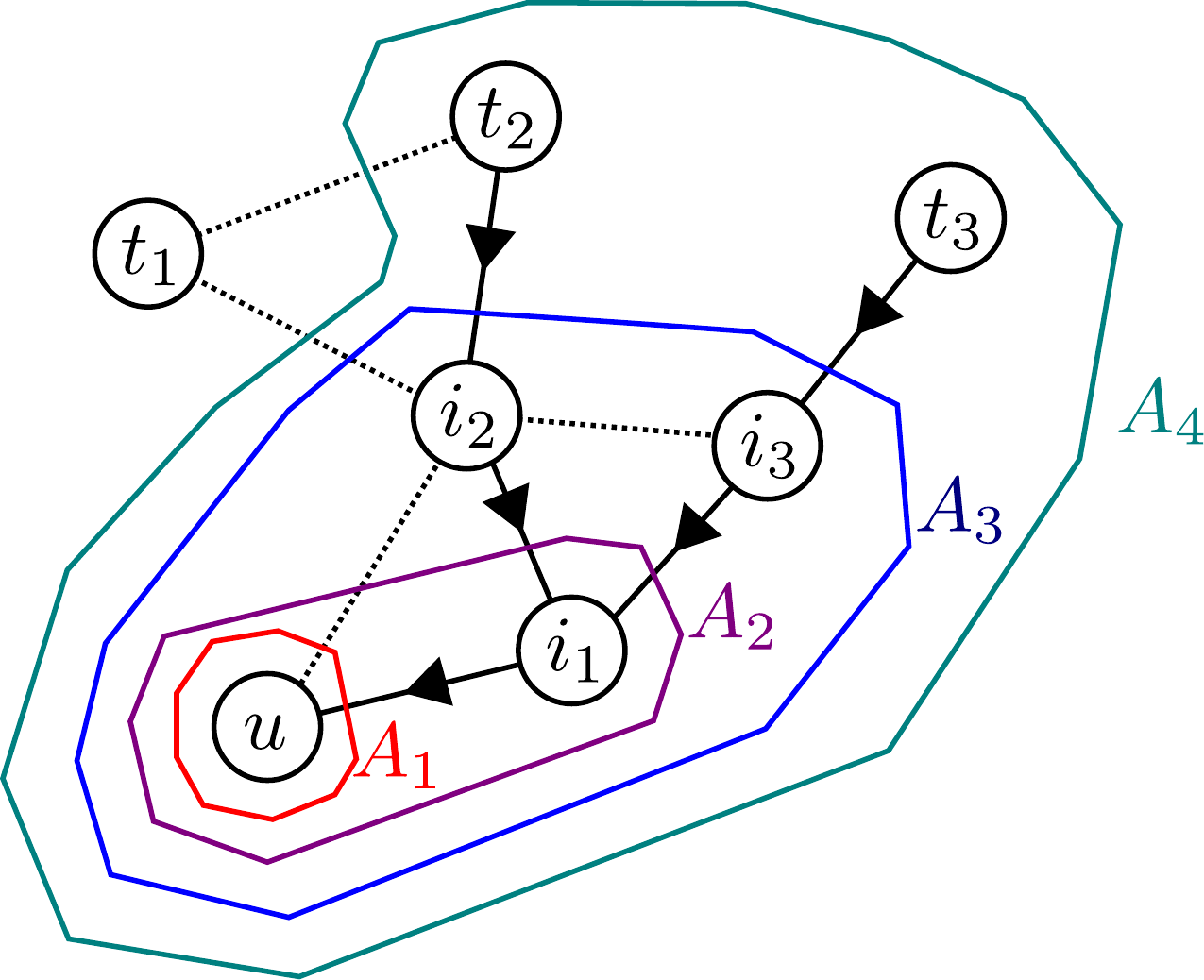}\protect\caption{\label{fig:Construction-of-Ak-ensembles}
Construction of the ensembles
$A_{k}$. Note that $B_{k}=A_{k}\setminus A_{k-1}$. The edges for
which $\phi_{ij}^{*}>\phi_{ij}$ with $i\in B_{k+1}$ and $j\in B_{k}$
are shown with a solid line and a dark arrow. }
\end{figure}

Let $n$ be the first $k$ for which $B_{k}\cap T\neq\emptyset$. First, we prove by induction that the sets $B_{k}$ for $k\leq n$ are non-empty. The set $B_{1}$ is non-empty by construction. Suppose
that the sets $B_{k}$ are non-empty. Define the set of edges connecting
two vertices in $A_{k}$ by
\[
E_{k}:=\left\{ \left(i,j\right)\in E\mid i,j\in A_{k}\right\} ,
\]
 and the set of edges with a starting point in $V\setminus A_{k}$
and an endpoint in $A_{k}$
\[
\overrightarrow{\partial}A_{k}:=\left\{ \left(j,i\right)\in E\mid j\notin A_{k}\,\text{and}\,i\in A_{k}\right\} .
\]
By summing the continuity Eq.~\eqref{eq:flow_conservation} for
the flow $\phi$ over the vertices in $A_{k}$ one obtains
\begin{eqnarray*}
0 & = & \sum_{i\in A_{k}}\left(q_{i}+\sum_{j\in\partial i}\phi_{ji}\right)\\
 & = & \sum_{i\in A_{k}}q_{i}+\sum_{i\in A_{k}}\sum_{j\in\partial i}\phi_{ji}\\
 & = & \sum_{i\in A_{k}}q_{i}+\sum_{\left(i,j\right)\in E_{k}}\left(\phi_{ij}+\phi_{ji}\right)+\sum_{\left(j,i\right)\in\overrightarrow{\partial}A_{k}}\phi_{ji}\\
 & = & \sum_{i\in A_{k}}q_{i}+\sum_{\left(j,i\right)\in\overrightarrow{\partial}A_{k}}\phi_{ji},
\end{eqnarray*}
where the skew-symmetry of the flow \eqref{eq:flow_anti_symmetry}
between the two last lines has been used. Summing up the same continuity relations for
the flow $\phi^{*}$ leads to the following inequality
\[
\sum_{\left(j,i\right)\in\overrightarrow{\partial}A_{k}}\phi_{ji}=-\sum_{i\in A_{k}}q_{i}<-\sum_{i\in A_{k}}q_{i}^{*}=\sum_{\left(j,i\right)\in\overrightarrow{\partial}A_{k}}\phi_{ji}^{*}.
\]
The above inequality implies that there exists $\left(v,w\right)\in\overrightarrow{\partial}A_{k}$
such that $\phi_{vw}^{*}>\phi_{vw}$. It remains to be checked that the
node $w\in A_{k}$ is an element of $B_{k}=A_{k}\setminus A_{k-1}$.
By construction if $w\in A_{k-1}$ then $v$ should be contained in
$A_{k}$ which is a contradiction. Thus $B_{k+1}$ is non-empty. We have just proved
by induction that $B_{k}$ for $k\leq n$ are non-empty and
moreover that $A_{k}$ is an increasing sequence of sets.
The last observation guarantees that $n$ is a finite number.

We now focus on proving that there exists a path $\left\{ i_{1},\ldots,i_{n}\right\} $
from some $t\in T$ to $u$ such that $\phi_{i_{l}i_{l+1}}^{*}>\phi_{i_{l}i_{l+1}}$.
By induction it is now easy to prove that for every node $i\in B_{k}$
there exists a path of length $k$ from $i$ to $u$ such that $\phi_{i_{l}i_{l+1}}^{*}>\phi_{i_{l}i_{l+1}}$.
Since it was already shown that there exists $n\in\mathbb{N}$ such that $B_{k}\cap T\neq\emptyset$
the proof is concluded.
\end{proof}
Theorem \ref{thm:aquarius} has an intuitive explanation,  clarifying the name chosen for the theorem. Picture
a flow of water through a network of aqueducts. If water leaks
appear at some points within the network the system becomes unbalanced.
Therefore an aqueduct inspector (``Aquarius'' in Latin) needs to compensate for the leaks pouring more water at
other nodes. This extra water added at the injection node will
flow along some path that has to end up at a leak. Otherwise the added
water accumulates and the network remains unbalanced.

\subsection{Dissipative Flow Networks}

Theorem \ref{thm:aquarius} applied to the dissipative flow
networks results in the following statement. 
\begin{cor}[Monotonicity of Potentials with Consumptions]
\label{cor:monotonicity_pressure_consumption}Let $\left(\phi,\pi\right)$
and $\left(\phi^{*},\pi^{*}\right)$ be solutions of the continuity
Eq.~\eqref{eq:flow_conservation} and the potential loss Eq.~\eqref{eq:potential_drop}
for the productions $q$ and $q^{*}$ respectively
and the same dissipation function $f$. If $\pi_{t}\geq\pi_{t}^{*}$
for all $t\in T\subset V$ and $q_{i}\geq q_{i}^{*}$ for all $i\in V\setminus T$
then $\pi_{u}\geq\pi_{u}^{*}$ for every node $u\in V\setminus T$. Moreover if $q_{u}>q_{u}^{*}$
the inequality is strict i.e. $\pi_{u}>\pi_{u}^{*}$.\end{cor}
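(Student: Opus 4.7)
My plan is to deduce this corollary from the Aquarius Theorem (Theorem \ref{thm:aquarius}) by converting its flow inequality along a path into a potential inequality via the potential drop equation \eqref{eq:potential_drop} and the monotonicity of $f_{ij}$. The boundary condition $\pi_t \geq \pi_t^*$ on $T$ is then used at the path's endpoint that lies in $T$.

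First I would fix an arbitrary $u \in V \setminus T$ and observe that the hypothesis $q_i \geq q_i^*$ on $V \setminus T$ is exactly the input of Theorem \ref{thm:aquarius}. That theorem hands me a non-intersecting path $i_1, \ldots, i_n$ with $i_1 \in T$, $i_n = u$ and $\phi_{i_l i_{l+1}}^* \geq \phi_{i_l i_{l+1}}$ for every consecutive pair along the path.

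Next I would apply \eqref{eq:potential_drop} to each edge of this path in both the starred and unstarred configurations and use that $f_{i_l i_{l+1}}$ is strictly monotonic in a single direction (in fact strictly increasing, as witnessed by the use of $f_{i_l i_{l+1}}^{-1}$ in Section \ref{sec:energy-function-method}). The Aquarius flow inequality, together with the sign flip produced by $-f$, then forces
\[
\pi_{i_{l+1}}^{*} - \pi_{i_l}^{*} \;\leq\; \pi_{i_{l+1}} - \pi_{i_l}
\]
for each $l$. Summing telescopes to $\pi_u^{*} - \pi_{i_1}^{*} \leq \pi_u - \pi_{i_1}$, and the boundary assumption $\pi_{i_1} \geq \pi_{i_1}^{*}$ (valid because $i_1 \in T$) delivers $\pi_u \geq \pi_u^{*}$. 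The strict case $q_u > q_u^{*}$ follows by the same argument once the Aquarius inequalities are upgraded to strict ones (as that theorem guarantees): strict monotonicity of $f_{i_l i_{l+1}}$ then produces a strict gap at each step of the telescoped sum, so $\pi_u^{*} - \pi_u < \pi_{i_1}^{*} - \pi_{i_1} \leq 0$.

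I do not anticipate a genuine obstacle: Theorem \ref{thm:aquarius} already carries out all the combinatorial work of locating the path. The only points requiring a little care are bookkeeping the minus sign when inverting the flow inequality through $-f_{ij}$, and verifying that the path really begins in $T$ so that the hypothesis on $\pi_T$ is available to close the argument at $i_1$.
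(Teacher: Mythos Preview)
Your proposal is correct and follows essentially the same route as the paper: invoke Theorem~\ref{thm:aquarius} to obtain a path from $T$ to $u$ along which $\phi^*\geq\phi$, then sum the potential drop equation \eqref{eq:potential_drop} along that path and use the (strict) monotonicity of each $f_{i_l i_{l+1}}$ together with the boundary inequality at $i_1\in T$. Your telescoping presentation and explicit separation of the non-strict and strict cases are in fact slightly cleaner than the paper's version, but the underlying argument is identical.
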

\begin{proof}
Consider $u\in V\setminus T$.
One evaluates the potential drop Eq.~\eqref{eq:potential_drop} as explained by Theorem \ref{thm:aquarius} to arrive at the inequality
\begin{eqnarray*}
\pi_{u} & = & \sum_{k=1}^{n}-f\left(\phi_{i_{k}i_{k+1}}\right)+\pi_{t}\\
 & > & \sum_{k=1}^{n}-f\left(\phi_{i_{k}i_{k+1}}^{*}\right)+\pi_{t}^{*}\\
 & = & \pi_{u}^{*},
\end{eqnarray*}
where we have just used (in between the two last lines) that $f_{ij}$ is an increasing
function.
\end{proof}
Corollary \ref{cor:monotonicity_production_consumption} allows us to make the following statement about relations between productions and consumptions.
\begin{cor}[Monotonicity of Productions with Consumptions]
\label{cor:monotonicity_production_consumption}Let $\left(\phi,\pi\right)$
and $\left(\phi^{*},\pi^{*}\right)$ be solutions of the continuity
Eq.~\eqref{eq:flow_conservation} and the potential loss Eq.~\eqref{eq:potential_drop} for the productions $q$ and $q^{*}$ respectively and the same dissipation function $f$. If $\pi_{t}=\pi_{t}^{*}$
for all $t\in T\subset V$ and $q_{i}\geq q_{i}^{*}$ for all $i\in V\setminus T$
then $q_{t}\leq q_{t}^{*}$ for all $t\in T$.\end{cor}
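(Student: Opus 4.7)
The plan is to combine Corollary \ref{cor:monotonicity_pressure_consumption} with the explicit expression of the nodal production as a sum of incident edge flows, and to exploit monotonicity of $f_{ij}^{-1}$ one neighbor at a time. Observe that the hypothesis $\pi_t=\pi_t^*$ on $T$ is in particular $\pi_t\geq\pi_t^*$ on $T$, so together with $q_i\geq q_i^*$ on $V\setminus T$ we are in the setting of Corollary \ref{cor:monotonicity_pressure_consumption}. That corollary immediately yields $\pi_u\geq\pi_u^*$ for every $u\in V\setminus T$, and of course $\pi_t=\pi_t^*$ for $t\in T$ by hypothesis. Thus I have pointwise control of the potentials on the \emph{whole} vertex set.

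Next I would rewrite the production at a node $t\in T$ in terms of potentials only. Using the flow conservation \eqref{eq:flow_conservation} and the skew-symmetry \eqref{eq:flow_anti_symmetry},
\[
q_t \;=\; -\sum_{j\in\partial t}\phi_{jt} \;=\; \sum_{j\in\partial t}\phi_{tj},
\]
and inverting the potential drop equation \eqref{eq:potential_drop} (which is allowed because $f_{tj}$ is strictly monotone, as already used in the proof of Theorem \ref{thm:uniqueness_existence_solutions}) gives
\[
q_t \;=\; \sum_{j\in\partial t} f_{tj}^{-1}\bigl(\pi_t-\pi_j\bigr),\qquad q_t^* \;=\; \sum_{j\in\partial t} f_{tj}^{-1}\bigl(\pi_t^*-\pi_j^*\bigr).
\]

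It then remains to compare the two sums term by term. For a neighbor $j\in\partial t$ that lies in $T$, we have $\pi_t=\pi_t^*$ and $\pi_j=\pi_j^*$, so the corresponding summands coincide. For a neighbor $j\in\partial t\cap(V\setminus T)$, Corollary \ref{cor:monotonicity_pressure_consumption} gives $\pi_j\geq\pi_j^*$ while $\pi_t=\pi_t^*$, so $\pi_t-\pi_j\leq\pi_t^*-\pi_j^*$; since $f_{tj}^{-1}$ is increasing, $f_{tj}^{-1}(\pi_t-\pi_j)\leq f_{tj}^{-1}(\pi_t^*-\pi_j^*)$. Summing over $j\in\partial t$ yields $q_t\leq q_t^*$, which is the claim.

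There is essentially no obstacle here beyond making the bookkeeping clean: Corollary \ref{cor:monotonicity_pressure_consumption} does all of the heavy lifting by turning an assumption on $q$ on $V\setminus T$ into an inequality on $\pi$ on $V\setminus T$, and the potential drop relation converts that back into an inequality on $q$ on $T$ with the opposite sense — exactly the ``conservation of imbalance'' flavor one expects from the Aquarius picture. The only small care required is to note that the two types of neighbors of $t$ (in $T$ and outside $T$) must be treated separately so that the equality $\pi_t=\pi_t^*$ is used in the right place; no strict inequality is claimed, so I do not need to track whether any of the $q_i>q_i^*$ strictly.
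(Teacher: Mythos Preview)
Your proof is correct, but it follows a different route from the paper's. The paper argues by contradiction: suppose $q_u>q_u^*$ for some $u\in T$; then move $u$ to the other side of the partition, i.e.\ apply Corollary~\ref{cor:monotonicity_pressure_consumption} with $T'=T\setminus\{u\}$. On $V\setminus T'=(V\setminus T)\cup\{u\}$ one now has $q_i\geq q_i^*$ with strict inequality at $u$, so the strict clause of Corollary~\ref{cor:monotonicity_pressure_consumption} gives $\pi_u>\pi_u^*$, contradicting the hypothesis $\pi_u=\pi_u^*$. Your argument instead applies Corollary~\ref{cor:monotonicity_pressure_consumption} once with the original $T$, obtains $\pi_j\geq\pi_j^*$ on all of $V$, and then reads off $q_t=\sum_{j\in\partial t}f_{tj}^{-1}(\pi_t-\pi_j)\leq\sum_{j\in\partial t}f_{tj}^{-1}(\pi_t^*-\pi_j^*)=q_t^*$ term by term. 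The paper's version is shorter but leans on the strict-inequality part of Corollary~\ref{cor:monotonicity_pressure_consumption} and on the (unstated) relabeling of the partition; yours is longer but fully direct, uses only the non-strict conclusion, and makes the mechanism---monotonicity of $f_{tj}^{-1}$ edge by edge---completely explicit.
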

\begin{proof}
Choose $u\in T$ and assume that $q_{u}>q_{u}^{*}$. Corollary \ref{cor:monotonicity_pressure_consumption}
applied to $u$ implies that $\pi_{u}>\pi_{u}^{*}$ which is a contradiction.
\end{proof}
Corollary \ref{cor:monotonicity_pressure_consumption} directly implies
that the maximum (and respectively minimum) of the potential $\pi_{i}\left(q_{R},q_{S},\pi_{T},f\right)$
given by Eq.~\eqref{eq:pressure_from_energy} is achieved at
a maximum (respectively minimum) of the customers production $q_{R}$
\begin{eqnarray}
\max_{q_{R}\in Q}\pi_{i}\left(q_{R},q_{S},\pi_{T},f\right) & = & \pi_{i}\left(\overline{q}_{R},q_{S},\pi_{T},f\right)\nonumber \\
\min_{q_{R}\in Q}\pi_{i}\left(q_{R},q_{S},\pi_{T},f\right) & = & \pi_{i}\left(\underline{q}_{R},q_{S},\pi_{T},f\right).\label{eq:max_pressure}
\end{eqnarray}
With regards to the Corollary \ref{cor:monotonicity_production_consumption}, it
tells us that the production $q_{i}\left(q_{R},q_{S},\pi_{T},f\right)$
given by Eq.~\eqref{eq:production_from_energy} achieves its
maximum (respectively minimum) at $\underline{q}_{R}$ (respectively at $\overline{q}_{R}$).
This implies that the maximum of our objective function is achieved
when all the regular customers are consuming at their maximum value
\begin{equation}
\min_{q_{R}\in Q}\sum_{i\in T}h_{i}\left(q_{i}\left(q_{R},q_{S},\pi_{T},f\right)\right)=\sum_{i\in T}h_{i}\left(q_{i}\left(\overline{q}_{R},q_{S},\pi_{T},f\right)\right).\label{eq:max_objective}
\end{equation}
Here in Eq.~(\ref{eq:max_objective}) we use the property that $h$ is a non-decreasing function of
$q_{i}$.

Eqs.~\eqref{eq:max_pressure} and \eqref{eq:max_objective}
show that only the two extreme scenarios of the regular customers
consumption has to be considered to guarantee feasibility of all other consumption configurations from the uncertain intervals.

\section{Illustration with Natural Gas Networks\label{sec:Illustration-with-Natural-gas-network}}

A natural gas network is a system of interconnected pipelines delivering natural gas from producers to consumers.  In the normal operational regime gas flows in the transmission (high pressure level) pipes are turbulent. Consider a pipeline of length $L$. As a pipeline length is much longer that its cross-section, the system can be modeled as one-dimensional, parameterized by position along the flow, $x\in[0,L]$, with cross-section effects averaged out. Then, the state
of the gas flow at a time $t$ is characterized by its pressure $p\left(x,t\right)$ and mass flow $\phi\left(x,t\right)$ along the pipe (both averaged over the cross-section). The two characteristics are related to each other via the following set of partial differential equations \cite{Osiadacz_1987,Misra2015,Thorley19873,sardanashvili2005computational}
\begin{eqnarray}
\partial_{t}\phi\left(x,t\right)+\partial_{x}p\left(x,t\right) & = & -\alpha\frac{\phi\left(x,t\right)\left|\phi\left(x,t\right)\right|}{2p\left(x,t\right)}+\kappa\left(x,t\right)\nonumber \\
\partial_{t}p\left(x,t\right)+c_s^2\partial_{x}\phi\left(x,t\right) & = & 0,\label{eq:gas_dynamic_equation}
\end{eqnarray}
where $c_s$ is the speed of sound in the gas and $\alpha$ is a constant that depends on the type of gas, size of the cross-section, roughness of the pipe surface and also on the Re-number, characterizing the level of turbulence.
The term $\kappa\left(x,t\right)$ in Eqs.~\eqref{eq:gas_dynamic_equation} accounts for a compression added at a compression station to compensate for pressure drop.
The first equation in Eqs.~\eqref{eq:gas_dynamic_equation} is
a phenomenological equation that quantify the loss of momentum due
to turbulent friction. The second equation in Eqs. \eqref{eq:gas_dynamic_equation} enforces conservation of the fluid mass along the pipe. Compression can be modeled as
\begin{equation}
\kappa\left(x,t\right)=b\delta\left(x-x_{c}\right),\label{eq:compression_station}
\end{equation}
where $\delta\left(\cdot\right)$ is the Dirac's delta function; $x_c$ is the compressor station position along the pipeline and $b$ is an additive compression factor of the station.

The equations are dynamic,  however for the purpose of planning the gas flow budget on the scale of a day, the dynamics in Eqs.~(\ref{eq:gas_dynamic_equation}) can be ignored, thus setting $\partial_{t}\phi=\partial_{t}p\equiv0$. In this case, after straightforward spatial integration along the pipe, Eq.~\eqref{eq:gas_dynamic_equation} transforms into
\begin{eqnarray}
p\left(L,t\right)^{2}-p\left(0,t\right)^{2} & = & -\frac{L\alpha}{2}\phi\left|\phi\right|+b\nonumber \\
\phi\left(x,t\right) & \equiv & \phi,\label{eq:steady_state_equation}
\end{eqnarray}
relating pressures at the ends of the pipe to the amount of flow and the value of compression acquired along the pipe.
Eq.~\eqref{eq:steady_state_equation} tells us that the flow in one pipe is constant and that it is driven by a difference of pressure squared at the endpoints of the pipeline. This static representation enables us to model the gas network in the steady-state regime as a dissipative network with a potential
equal to the pressure squared $\pi=p^{2}$. The dissipation function over a pipe $(i,j)$ is nonlinear:
\begin{equation}
f_{ij}\left(\phi_{ij}\right)=-\frac{L_{ij}\alpha_{ij}}{2}\phi_{ij}\left|\phi_{ij}\right|+b_{ij}.
\end{equation}

The three types of nodes that we have introduced above in the maximum profit problem
(sources, internal customers and terminal) map into nodes of the gas network as follows. The sources that inject gas into the network are gas producers, e.g. gas processing plants, Liquid Natural Gas (LNG) terminals and storage injecting gas into the system. The internal customers with uncertain demand are consumers with existing contracts for gas delivery, such as Local Distribution Companies (LDC) and electric gas-fired plants. The terminals are opportunistic customers ready to buy whatever amount of gas which can be made available (e.g. LNG terminals and storage reservoirs working in the regime of gas accumulation). The uncertainty on the side of the internal customers accounts for exogenous changes such as those related to LDC consumers' heating requirements, and fluctuations of gas consumption at the gas-fired plans due to uncertainty on the electric grid side (e.g. the renewable generation).

The choice of operation variables is not unique and depends on the regime of operation, type of the system, country, etc. For example in US,  typical large-scale producers would maintain constant injection/flow, thus allowing changes in the pressure, while LDC (and related city-gates) on the contrary would withdraw constant flow allowing the pressure to meander.

\section{Path Forward}
In this manuscript we proved that the robust maximum profit problem over nonlinear dissipative network flow problem is tractable. The strategy that we have employed in the proof is based on the search for an explicit formulation of the
special limiting scenarios such that feasibility of solution for the special scenarios guarantees feasibility for all other scenarios from the uncertainty range.  We proved that in the general case of the static dissipative network flow it is sufficient to maintain feasibility only for two scenarios.  The essence of this major step in our proof strategy is related to the very strong monotonicity property in the space of solutions.

We envision extending this work in the future along the following three directions:
\begin{itemize}
\item We plan on moving from static formulation to dynamic and thus to analyze dynamic versions of respective robust optimizations from the perspective of scenario reduction discussed in this manuscript. There are more than one possible generalization strategies (for transition from static to dynamic). In particular,  one may hope to find a dynamic (Lagrangian) generalization of the Aquarius principle, i.e. generalization of the Theorem \ref{thm:aquarius}. In the context of natural gas application, we plan to introduce and analyze consumption robust version of the dynamic optimization (off-line control) problem discussed in \cite{15SCB} based on the dynamic Eqs.~(\ref{eq:steady_state_equation}). We will also attempt to develop dynamic version of our scenario-reduction technique suitable for control and dynamic optimization traffic network problems of the type discussed in \cite{10CSADF,13Var_a,13Var_b}.
\item We would like to design an efficient numerical scheme to solve the tractable version of the robust maximum profit/minimum cost problem. As formulated in Theorem \ref{thm:ARC_max_throughput} the problem is a bi-level optimization task. To advance this task we plan to utilize the energy function representation in order to formulate the entire problem as one minimization procedure in the spirit of \cite{Misra2015a}.
\item Finally, we plan to extend the results reported in the manuscript to more general types of uncertainty sets such as ellipsoids. The ellipsoid type of uncertainty set is more challenging than the one of the box kind considered in the manuscript. Even if the monotonicity properties guarantee existence of the extremal scenarios, deriving explicit form of the extremal scenarios in setting other than of the box type remains a challenge. 
\end{itemize}

\section*{Acknowledgment}

The authors thank S. Backhaus for multiple discussions and advice.
The work at LANL was carried out under the auspices of the National Nuclear Security Administration of the U.S. Department of Energy at Los Alamos National Laboratory under Contract No. DE-AC52-06NA25396 and it was partially supported by DTRA Basic Research Project $\#10027-13399$. The authors also acknowledge partial support of the Advanced Grid Modeling Program in the US Department of Energy Office of Electricity.

\bibliographystyle{IEEEtran}
\bibliography{cdc_gas_network,cdc_robust_sm}

\begin{thebibliography}{10}
\providecommand{\url}[1]{#1}
\csname url@samestyle\endcsname
\providecommand{\newblock}{\relax}
\providecommand{\bibinfo}[2]{#2}
\providecommand{\BIBentrySTDinterwordspacing}{\spaceskip=0pt\relax}
\providecommand{\BIBentryALTinterwordstretchfactor}{4}
\providecommand{\BIBentryALTinterwordspacing}{\spaceskip=\fontdimen2\font plus
\BIBentryALTinterwordstretchfactor\fontdimen3\font minus
  \fontdimen4\font\relax}
\providecommand{\BIBforeignlanguage}[2]{{%
\expandafter\ifx\csname l@#1\endcsname\relax
\typeout{** WARNING: IEEEtran.bst: No hyphenation pattern has been}%
\typeout{** loaded for the language `#1'. Using the pattern for}%
\typeout{** the default language instead.}%
\else
\language=\csname l@#1\endcsname
\fi
#2}}
\providecommand{\BIBdecl}{\relax}
\BIBdecl

\bibitem{Osiadacz_1987}
A.~Osiadacz, \emph{Simulation and analysis of gas networks}.\hskip 1em plus
  0.5em minus 0.4em\relax Gulf Publishing Company,Houston, TX, Jan 1987.

\bibitem{12BNV}
F.~Babonneau, Y.~Nesterov, and J.-P. Vial, ``Design and operations of gas
  transmission networks,'' \emph{Operations Research}, vol.~60, no.~1, pp.
  34--47, 2012.

\bibitem{Misra2015}
S.~Misra, M.~Fisher, S.~Backhaus, R.~Bent, M.~Chertkov, and F.~Pan, ``Optimal
  compression in natural gas networks: A geometric programming approach,''
  \emph{Control of Network Systems, IEEE Transactions on}, vol.~2, no.~1, pp.
  47--56, March 2015.

\bibitem{Kirchhoff1847}
G.~Kirchhoff, ``Über die auflösung der gleichungen, auf welche man bei der
  untersuchung der linearen verteilung galvanischer ströme gefuhrt wird,''
  \emph{Ann. Phys. Chem.}, vol.~72, pp. 497--508, 1847, english transl. IRE
  Trans. Circuit Theory (1958), pp. 4-7 CT-5.

\bibitem{98Bol}
B.~Bollobas, \emph{Modern Graph Theory}.\hskip 1em plus 0.5em minus 0.4em\relax
  Springer, 1998.

\bibitem{10CSADF}
G.~Como, K.~Savla, D.~Acemoglu, M.~A. Dahleh, and E.~Frazzoli, ``On robustness
  analysis of large-scale transportation networks,'' in \emph{Proceedings of
  the International Symposium on Mathematical Theory of Networks and Systems},
  2010, pp. 2399–--2406.

\bibitem{13Var_a}
P.~Varaiya, ``Max pressure control of a network of signalized intersections,''
  \emph{Transportation Research Part C: Emerging Technologies}, vol.~36, pp.
  177--195, 2013.

\bibitem{13Var_b}
------, ``The max-pressure controller for arbitrary networks of signalized
  intersections,'' in \emph{Advances in Dynamic Network Modeling in Complex
  Transportation Systems}.\hskip 1em plus 0.5em minus 0.4em\relax Springer,
  2013, pp. 27--66.

\bibitem{89AMO}
R.~K. Ahuja, T.~L. Magnanti, and J.~B. Orlin, \emph{Network flows}.\hskip 1em
  plus 0.5em minus 0.4em\relax New York, NY, USA: Elsevier North-Holland, Inc.,
  1989.

\bibitem{95AMOR}
R.~K. Ahuja, T.~L. Magnanti, J.~B. Orlin, and M.~R. Reddy, ``Applications of
  network optimization,'' in \emph{Handbooks in Operations Research and
  Management Science. Network Models}.\hskip 1em plus 0.5em minus 0.4em\relax
  North-Holland, 1995, vol.~7, pp. 1--75.

\bibitem{03BS}
D.~Bertsimas and M.~Sim, ``Robust discrete optimization and network flows,''
  \emph{Mathematical Programming Series B}, vol.~98, pp. 49–--71, 2003.

\bibitem{ben2009robust}
A.~Ben-Tal, L.~El~Ghaoui, and A.~Nemirovski, \emph{Robust optimization}.\hskip
  1em plus 0.5em minus 0.4em\relax Princeton University Press, 2009.

\bibitem{13BNS}
\BIBentryALTinterwordspacing
D.~Bertsimas, E.~Nasrabadi, and S.~Stiller, ``Robust and adaptive network
  flows,'' \emph{Operations Research}, vol.~61, no.~5, pp. 1218--1242, 2013.
  [Online]. Available: \url{http://dx.doi.org/10.1287/opre.2013.1200}
\BIBentrySTDinterwordspacing

\bibitem{13BG}
\BIBentryALTinterwordspacing
D.~Bertsimas and V.~Goyal, ``\BIBforeignlanguage{English}{On the
  approximability of adjustable robust convex optimization under
  uncertainty},'' \emph{\BIBforeignlanguage{English}{Mathematical Methods of
  Operations Research}}, vol.~77, no.~3, pp. 323--343, 2013. [Online].
  Available: \url{http://dx.doi.org/10.1007/s00186-012-0405-6}
\BIBentrySTDinterwordspacing

\bibitem{Hettich93}
R.~Hettich and K.~Kortanek, ``Semi-infinite programming: theory, methods, and
  applications,'' \emph{SIAM Rev}, vol.~35, no.~3, 1993.

\bibitem{BentalNemirovski98}
A.~Ben-Tal and A.~Nemirovski, ``Robust convex optimization,'' \emph{Mathematics
  of Operations Research}, vol.~23, no.~4, 1998.

\bibitem{Nemirovski99}
------, ``Robust solutions of uncertain linear programs,'' \emph{OR Letters},
  vol.~25, pp. 1--13, 1999.

\bibitem{BenTal00}
A.~Ben-Tal, L.~E. Ghaoui, and A.~Nemirovski, \emph{Robust Semidefinite
  Programming}.\hskip 1em plus 0.5em minus 0.4em\relax Kluwer Academic
  Publishers, 2000.

\bibitem{Bertsimas07}
D.~Bertsimas, O.~Nohadani, and K.~M. Teo, ``Robust nonconvex optimization for
  simulatino-based problems,'' \emph{Operations Research}, 2007.

\bibitem{Boyd09}
A.~Mutapcic and S.~Boyd, ``Cutting-set methods for robust convex optimization
  with pessimizing oracles,'' \emph{Optimization Methods and Software},
  vol.~24, no.~3, 2009.

\bibitem{Thorley19873}
\BIBentryALTinterwordspacing
A.~Thorley and C.~Tiley, ``Unsteady and transient flow of compressible fluids
  in pipelines-a review of theoretical and some experimental studies,''
  \emph{International Journal of Heat and Fluid Flow}, vol.~8, no.~1, pp. 3 --
  15, 1987. [Online]. Available:
  \url{http://www.sciencedirect.com/science/article/pii/0142727X87900440}
\BIBentrySTDinterwordspacing

\bibitem{sardanashvili2005computational}
S.~Sardanashvili, ``Computational techniques and algorithms (pipeline gas
  transmission),'' \emph{Moscow, Russia: FSUE "Oil and Gaz" Publishing House,
  IM Gubkin Russian State University of Oil and Gas}, 2005.

\bibitem{15SCB}
A.~Zlotnik, M.~Chertkov, and S.~Backhaus, ``Optimal control of transient flow
  in natural gas networks,'' \emph{Submitted to 54th IEEE Conference on
  Decision and Control, Osaka}, 2015.

\bibitem{Misra2015a}
S.~Misra, M.~Vuffray, and M.~Chertkov, ``Maximum throughput problem in
  dissipative flow networks with application to natural gas systems,''
  \emph{Submitted to 54th IEEE Conference on Decision and Control, Osaka},
  2015.

\end{thebibliography}

\end{document}